\newtheorem*{theorem}{Theorem}
\newtheorem*{corollary}{Corollary}
\begin{document}

\title{A Converse to the Whitehead Theorem}
\author{Pasha Zusmanovich}
\address{Hl\'i{\dh}arhjalli 62, K\'opavogur 200, Iceland}
\email{justpasha@gmail.com}
\date{August 1, 2008; last revised May 19, 2009}

\begin{abstract}
In this note, which is a postscript to the earlier note \cite{h2}, 
we show that finite-dimensional Lie algebras over a field of characteristic zero
such that their high-degree cohomology in any finite-dimensional non-trivial 
irreducible module vanishes, are, essentially, 
direct sums of semisimple and nilpotent algebras.
\end{abstract}

\maketitle

The classical First and Second Whitehead Lemmata state that the first, respectively 
second, cohomology group of a finite-dimensional semisimple Lie algebra 
over a field of characteristic zero with coefficients
in any finite-dimensional module vanishes. This pattern breaks down, however, at the 
third cohomology: it is well-known that the third cohomology of a simple Lie algebra
with coefficients in the trivial one-dimensional module is one-dimensional, 
the basic cocycle being constructed from the Killing form.
There is, however, a related and not less classical result 
holding for all higher cohomology groups which is called sometimes the Whitehead Theorem:
for any finite-dimensional semisimple Lie algebra $L$ and any non-trivial
finite-dimensional irreducible $L$-module $V$, $H^n(L,V) = 0$ for any $n\ge 3$.

It is very natural to ask whether a converse to these statements holds.
A converse to the Second Whitehead Lemma was established in \cite{h2}.
The aim of this note is to observe that a converse to the Whitehead Theorem 
readily follows from the results already established in the literature.

In what follows, all algebras and modules assumed to be finite-dimensional,
and the base field is of characteristic zero. 
Finite-dimensionality is obviously crucial
in most of the places. Some reasonings below are valid in any characteristic, but the 
case of positive characteristic is trivial modulo existing results, 
as noted in \cite{h2}.

Our notations and terminology are standard: for a Lie algebra $L$ and an $L$-module $V$,
$H^n(L,V)$ denotes the $n$th cohomology of $L$ with coefficients in $V$, 
$V^L$ denotes the submodule of $L$-invariant points,
$Z(L)$ and $Rad(L)$ denotes the center and the solvable radical of $L$ respectively, 
$V^*$ denotes an $L$-module adjoint to $V$.
When considered as an $L$-module, the base field $K$ is understood as the trivial 
one-dimensional module. 
$L$ is called unimodular if $Tr(ad\,x) = 0$ for any $x\in L$, where
$Tr$ denotes the trace of a linear operator.

\begin{theorem}
For a Lie algebra $L$, the following are equivalent:
\begin{enumerate}
\item
$L$ is the direct sum of a semisimple algebra and a nilpotent algebra.
\item
$H^n(L,V) = 0$ for any $n$ and any non-trivial irreducible $L$-module $V$.
\item
$H^{\dim L - 1}(L,V) = 0$ for any non-trivial irreducible $L$-module $V$.
\item
$H^1(L,V) = 0$ for any non-trivial irreducible $L$-module $V$.
\end{enumerate}  
\end{theorem}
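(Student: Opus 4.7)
I would establish the cycle (i) $\Rightarrow$ (ii) $\Rightarrow$ (iii) $\Rightarrow$ (iv) $\Rightarrow$ (i). Of these, (ii) $\Rightarrow$ (iii) is immediate (specialize the exponent to $n = \dim L - 1$); the three substantive steps rest respectively on the K\"unneth formula, Poincar\'e duality for Lie algebra cohomology, and a classical cohomological characterization of direct sums of semisimple and nilpotent Lie algebras.

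For (i) $\Rightarrow$ (ii), write $L = S \oplus N$ as a direct sum of ideals so that $[S, N] = 0$, and let $V$ be a finite-dimensional irreducible $L$-module. Then $V \cong V_1 \otimes V_2$ with $V_1$ irreducible over $S$ and $V_2$ irreducible over $N$, the latter one-dimensional by Lie's theorem in characteristic zero. The K\"unneth formula yields
$$H^n(L, V) \;\cong\; \bigoplus_{p+q=n} H^p(S, V_1) \otimes H^q(N, V_2).$$
If $V$ is non-trivial, then either $V_1$ is non-trivial---in which case every $H^p(S, V_1)$ vanishes by the First and Second Whitehead Lemmata ($p = 1, 2$), the Whitehead Theorem ($p \ge 3$), and the fact that $V_1^S = 0$ ($p = 0$)---or $V_2$ is a non-trivial one-dimensional $N$-module, in which case every $H^q(N, V_2)$ vanishes by Dixmier's theorem on the cohomology of nilpotent Lie algebras with non-trivial coefficients. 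Either way all summands vanish.

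For (iii) $\Rightarrow$ (iv), I would invoke Poincar\'e duality for Lie algebra cohomology:
$$H^n(L, V) \;\cong\; H^{\dim L - n}(L, V^* \otimes K_\chi)^*,$$
where $K_\chi$ denotes the one-dimensional $L$-module given by the character $\chi(x) = Tr(ad\,x)$. At $n = \dim L - 1$ this translates (iii) into: $H^1(L, W) = 0$ for every irreducible $W \not\cong K_\chi$. If $\chi \ne 0$, then $K \ne K_\chi$, so one may take $W = K$ and conclude $(L/[L, L])^* = H^1(L, K) = 0$, forcing $L = [L, L]$; but then every character of $L$, including $\chi$ itself, vanishes on $[L, L] = L$, a contradiction. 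Hence $L$ must already be unimodular, $K_\chi \cong K$, and the rewritten condition is precisely (iv).

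The main obstacle is (iv) $\Rightarrow$ (i), which is the direction alluded to in the introduction's promise that the theorem ``readily follows from the results already established in the literature''. The natural route is to fix a Levi decomposition $L = S \ltimes Rad(L)$ and to use the low-degree terms of the Hochschild--Serre spectral sequence, which, after Whitehead's Lemmata kill the terms $H^i(S, V^{Rad(L)})$ for $i = 1, 2$, collapse to an isomorphism $H^1(L, V) \cong H^1(Rad(L), V)^S$ for non-trivial irreducible $V$. From this, a classical structural analysis (essentially due to Dixmier, and in the spirit of \cite{h2}) would extract from (iv) both that $Rad(L)$ is nilpotent and that $S$ centralizes $Rad(L)$, yielding the direct sum decomposition of (i).
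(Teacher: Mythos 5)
Your implication (iii) $\Rightarrow$ (iv) is correct and is essentially the paper's argument: your $V^*\otimes K_\chi$ is exactly Hazewinkel's ``minus-twisted'' module $(V^*)^{-tw}$, and the unimodularity argument ($\chi\ne 0$ forces $H^1(L,K)=0$, hence $L=[L,L]$, hence $\chi=0$) is the same. But two of your steps have genuine gaps. First, in (i) $\Rightarrow$ (ii) you invoke Lie's theorem and the factorization $V\cong V_1\otimes V_2$ of an irreducible module over a direct sum into a tensor product of irreducibles; both require the base field to be algebraically closed, which is not assumed (the paper is explicitly careful on this point elsewhere). This is repairable --- the paper instead argues directly that a non-trivial irreducible $V$ must be free of trivial submodules over at least one of the summands $S$, $N$ (using only that the two actions commute), and then kills the corresponding tensor factor in the K\"unneth decomposition by Dixmier's theorem on the $N$ side or by Whitehead plus complete reducibility on the $S$ side --- but as written your argument does not work over a general field of characteristic zero.

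The more serious problem is (iv) $\Rightarrow$ (i), which is the heart of the theorem and which you have not actually proved. Reducing via Hochschild--Serre to $H^1(L,V)\cong H^1(Rad(L),V)^S$ is a reasonable first move, but the entire content of the implication --- extracting from the vanishing of $H^1$ that $Rad(L)$ is nilpotent \emph{and} that $S$ centralizes it --- is deferred to ``a classical structural analysis,'' with no indication of how it goes. The paper carries this out by adapting Barnes's proof: one first checks that condition (iv) passes to quotients (via the five-term exact sequence), then inducts on $\dim L$ using a minimal abelian ideal $I$. By induction $L/I\simeq S\oplus Rad(L)/I$ with $Rad(L)/I$ nilpotent; if $I$ is a trivial $L/I$-module one argues $I\subseteq Z(L)$, so $Rad(L)$ is nilpotent and $S$ acts nilpotently, hence trivially, on $Rad(L)$; if $I$ is non-trivial one uses the already-proved implication (i) $\Rightarrow$ (ii) to get $H^2(L/I,I)=0$ and split the extension $0\to I\to L\to L/I\to 0$. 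Note also that the relevant prior result is Barnes's, not Dixmier's, and that Barnes's hypothesis (vanishing only for irreducible modules of dimension $>1$) yields only a supersolvable radical; your stronger hypothesis (iv), which also covers one-dimensional non-trivial modules, is what forces nilpotency, and your sketch never uses this distinction. Until this induction (or an equivalent argument) is supplied, the implication is unproved.
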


\begin{proof}
(i) $\Rightarrow$ (ii):
For a nilpotent Lie algebra even a more general statement is true:  
by \cite[Th\'eor\`eme 1]{dixmier}, cohomology of a nilpotent Lie algebra with 
coefficients in a module without trivial submodules, vanishes.
The same is true for a semisimple Lie algebra:
this follows from the classical results mentioned above,
complete reducibility of representations of a semisimple Lie algebra,
and the additivity of cohomology (cohomology with coefficients in the direct sum of 
modules decomposes as the direct sum of cohomologies with coefficients in direct 
summands).

Now let $L = S \oplus N$, the direct sum of a semisimple Lie algebra $S$ and a nilpotent
Lie algebra $N$, and $V$ be a non-trivial irreducible $L$-module. Suppose that $V$, 
as an $S$-module, contains the trivial submodule $K$. As the actions of $S$ and $N$ on $V$
commute, $S(N^i K) = N^i(SK) = 0$ for any $i\in \mathbb N$. 
Thus $\sum_i N^i K$ is an $L$-submodule
of $V$ and hence coincides with $V$, so $V$ is a trivial $S$-module.
Then any trivial $N$-submodule of $V$ will be also a trivial $L$-submodule of $V$,
hence $V$ does not contain trivial submodules as an $N$-module.

Interchanging in this reasoning $S$ and $N$ (we have not used any properties of 
$S$ and $N$, and it is valid for the direct sum of any Lie algebras), 
we see that at least for one of algebras 
$S$, $N$, the module $V$ over that algebra does not contain trivial submodules.

By the K\"unneth formula,
$H^n(S \oplus N, V) \simeq \bigoplus_{i+j=n} H^i(S,V) \otimes H^j(N,V)$,
and by above, at least one of the tensor factors in each summand vanishes.

\medskip
(ii) $\Rightarrow$ (iii), as well as (ii) $\Rightarrow$ (iv), is obvious.

\medskip
(iii) $\Rightarrow$ (iv):
By the result of Hazewinkel \cite{haze}, which is a generalization
of the well-known Poincar\'e duality for the cohomology of unimodular
Lie algebras (\cite[Chapter 1, \S 1.6B]{fuchs}) to all Lie algebras,
$H^n(L, (V^{tw})^*) \simeq H^{\dim L - n} (L,V)^*$ for any 
$0 \le n \le \dim L$ and any $L$-module $V$, where $V^{tw}$ is a ``twisted''
$L$-module defined on the vector space $V$ as follows:
$x \mapsto \rho_V(x) - Tr(ad\,x)1_V$ for $x\in L$, where $x \mapsto \rho_V(x)$ is the 
representation corresponding to the module $V$, and $1_V$ is the identity map on 
$V$.

Let $V$ be a non-trivial irreducible $L$-module. By the Hazewinkel's duality,
\begin{equation}\label{haze}
H^1(L, (V^{tw})^*) \simeq H^{\dim L - 1} (L,V)^*
\end{equation}
and
\begin{equation}\label{haze-minustw}
H^1(L,V) \simeq H^{\dim L - 1} (L,(V^*)^{-tw})^*,
\end{equation}
where the ``minus-twisted'' module 
$W^{-tw}$ is obtained from the module $W$ by the formula 
$x \mapsto \rho_W(x) + Tr(ad\,x)1_W$.
It is well-known (see, for example, \cite[Chapter 1, \S 3.3]{bourbaki}) that 
a module $V$ is irreducible if and only if $V^*$ is irreducible,
and it is obvious that $V$ is irreducible if and only if $V^{-tw}$ is irreducible.
Hence $(V^*)^{-tw}$ is an irreducible $L$-module. 

If $(V^*)^{-tw}$ is a trivial module,
then $V^*$ is an one-dimensional module $Kv$ defined by $xv = -Tr(adx)v$ for $x\in L$,
so $V$ is an one-dimensional module $Kv$ defined by $xv = Tr(adx)v$, and
$V^{tw}$, as well as $(V^{tw})^*$, is the one-dimensional trivial $L$-module. 
Since $H^{\dim L - 1}(L,V) = 0$,
(\ref{haze}) implies $H^1(L,K) = 0$, what is equivalent to $[L,L] = L$.
The latter, in its turn, implies that $Tr(ad\,x) = 0$ for any $x\in L$
(i.e. $L$ is unimodular), so $V$ is trivial, a contradiction.

Thus $(V^*)^{-tw}$ is a non-trivial module, hence $H^{\dim L - 1} (L,(V^*)^{-tw}) = 0$,
and by (\ref{haze-minustw}), $H^1(L,V) = 0$.

Of course, very similar reasonings could be utilized to prove the 
reverse implication (iv) $\Rightarrow$ (iii).

\medskip
(iv) $\Rightarrow$ (i):
In \cite[Theorem 3]{barnes} a very close statement is proved: a Lie algebra $L$ is the
direct sum of a semisimple algebra and a supersolvable algebra if and only if 
$H^1(L,V) = 0$ for any irreducible $L$-module $V$ of dimension $>1$. 
Note a subtle but important 
difference between two conditions on $L$-modules: solvable Lie algebras
(and, more generally, Lie algebras with a nonzero radical) may have 
(and necessarily have if the ground field is algebraically closed, by the Lie Theorem)
one-dimensional (and, hence, irreducible) non-trivial module. 
What follows is, essentially, modified for our needs proof from \cite{barnes}.

First note that condition (iv) is closed under quotients. Indeed, let a Lie algebra
$L$ satisfies (iv), $I$ be an ideal of $L$, and $V$ be a non-trivial irreducible 
$L/I$-module. We can lift $V$ to an $L$-module by letting $I$ act on it trivially. 
Obviously, the such lifted $L$-module is also non-trivial and irreducible.
The beginning of the standard $5$-term long exact sequence which follows from the 
Hochschild--Serre spectral sequence, reads: $0 \to H^1(L/I, V^I) \to H^1(L,V)$.
As $V^I = V$ and $H^1(L,V) = 0$, we have $H^1(L/I, V) = 0$.

Now we argue by induction on the dimension of $L$. The case of dimension $1$ is obvious.
Suppose $L$ is not semisimple and satisfies (iv). 
Then $L$ has a minimal abelian ideal $I$, 
$L/I$ also satisfies (iv), and by induction assumption satisfies (i).
Let $L = S \ltimes Rad(L)$ be the Levi--Malcev decomposition of $L$. Obviously,
$I \subseteq Rad(L)$, and $L/I \simeq S \ltimes Rad(L)/I$ is the Levi--Malcev
decomposition of $L/I$. As (i) amounts to saying that the Levi--Malcev decomposition of 
a Lie algebra degenerates to the direct sum and its radical is nilpotent, it follows that
$L/I \simeq S \oplus Rad(L)/I$ and $Rad(L)/I$ is nilpotent.
Note also that since $I$ is minimal, it is an irreducible $L/I$-module.

If $I$ is a trivial $L/I$-module, then $I \subseteq Z(L) \subseteq Z(Rad(L))$.
Since extension of a nilpotent Lie algebra by an ideal lying in its center is obviously
nilpotent, $Rad(L)$ is nilpotent. Also, since $S$ acts on $Rad(L)/I$ trivially,
$[S,Rad(L)] \subseteq I$, and hence $S$ acts on $Rad(L)$ nilpotently. But since $S$
is semisimple, this means $[S,Rad(L)] = 0$.

Suppose $I$ is not a trivial $L/I$-module.
By (i) $\Rightarrow$ (ii), $H^2(L/I,I) = 0$, and hence
the extension $0 \to I \to L \to L/I \to 0$ splits, i.e. 
$L \simeq S \oplus Rad(L)/I \oplus I$, thus $L$ is isomorphic to the direct sum 
of a semisimple algebra $S$ and a nilpotent algebra $Rad(L)/I \oplus I$.
\end{proof}

\begin{corollary}
For a Lie algebra $L$, the following are equivalent:
\begin{enumerate}
\item
$H^n(L,V) = 0$ for any $n\ge 3$ and any non-trivial irreducible $L$-module $V$.
\item
$L$ is either the direct sum of a semisimple and a nilpotent algebra,
or a $2$-dimensional algebra, or a $3$-dimensional unimodular algebra.
\end{enumerate}
\end{corollary}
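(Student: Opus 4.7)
The plan is to reduce the corollary to the Theorem wherever possible and to handle the low-dimensional exceptions by hand. Since the cochain complex computing $H^\bullet(L,V)$ vanishes in degrees above $\dim L$, the hypothesis in (i) is vacuous when $\dim L \le 2$, amounts to $H^3(L,V) = 0$ when $\dim L = 3$, and (since then $\dim L - 1 \ge 3$) specializes to condition (iii) of the Theorem when $\dim L \ge 4$. This trichotomy matches the three families appearing in (ii) exactly.

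For the direction (i) $\Rightarrow$ (ii) I would argue case by case. A direct sum of a semisimple and a nilpotent algebra satisfies the full clause (ii) of the Theorem, hence certainly the weaker statement here. A $2$-dimensional $L$ gives $H^n(L,V) = 0$ for $n \ge 3$ from the shape of the cochain complex alone. For a $3$-dimensional unimodular $L$, the only relevant degree is $n = 3$; applying Hazewinkel's duality, which in the unimodular case ($V^{tw}=V$) reduces to classical Poincar\'e duality, yields $H^3(L,V) \simeq ((V^*)^L)^*$, and this vanishes because a non-trivial irreducible $V^*$ has no trivial submodule.

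For (ii) $\Rightarrow$ (i) I would again split by $\dim L$. If $\dim L \ge 4$, the hypothesis yields condition (iii) of the Theorem, placing $L$ in the first family. If $\dim L \le 2$, the algebra is either $1$-dimensional (abelian, a trivial member of the first family) or $2$-dimensional (the second family). The delicate case is $\dim L = 3$: here only $H^3(L,V) = 0$ is available. Hazewinkel's duality, applied just as in the proof of (iii) $\Rightarrow$ (iv) of the Theorem, gives $H^3(L,V) \simeq ((V^*)^{-tw})^{L*}$; the module $(V^*)^{-tw}$ is irreducible and degenerates to the trivial module exactly when $V$ is the one-dimensional module defined by $x \mapsto Tr(ad\,x)v$. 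If $L$ were not unimodular, this particular $V$ would be a non-trivial irreducible with $H^3(L,V) \simeq K \ne 0$, contradicting the hypothesis; so $L$ must be unimodular, landing in the third family.

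The hard part will be the $\dim L = 3$ case in both directions: one has to use the twisted Poincar\'e duality to single out unimodularity as precisely the extra condition distinguishing these exceptional $3$-dimensional algebras, and verify that the trace character is the sole potential obstruction. Everything else is a routine appeal to the Theorem combined with the vanishing of the cochain complex above degree $\dim L$.
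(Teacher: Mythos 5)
Your proposal is correct and follows essentially the same route as the paper: reduce to the Theorem for $\dim L\ge 4$, note vacuousness for $\dim L\le 2$, and use Hazewinkel's duality $H^3(L,V)\simeq (((V^*)^{-tw})^L)^*$ to identify unimodularity as the exact condition in dimension $3$. The only slip is cosmetic: you have swapped the labels of the two implications (your ``(i) $\Rightarrow$ (ii)'' paragraph actually proves (ii) $\Rightarrow$ (i) and vice versa), but both directions are present and correctly argued.
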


\begin{proof}
For $\dim L > 3$, in one direction, put $n = \dim L - 1$ and invoke implication 
(iii) $\Rightarrow$ (i) of the Theorem, and in another direction
this is exactly implication (i) $\Rightarrow$ (ii) of the Theorem.

For $\dim L = 2$, (i) is satisfied vacuously.

Let $\dim L = 3$. (i) is non-vacuous only for $n=3$.
By the Hazewinkel's duality, 
$H^3(L,V) \simeq H^0(L,(V^*)^{-tw})^* \simeq (((V^*)^{-tw})^L)^*$. By the same arguments 
as in the proof of implication (iii) $\Rightarrow$ (iv) of the Theorem,
$(V^*)^{-tw}$ is irreducible if and only if $V$ is irreducible, and is trivial 
if and only if $V$ is isomorphic to $K^{-tw}$. 
Consequently, (i) in this case is equivalent to the triviality of the module $K^{-tw}$, 
what, in its turn, is equivalent to unimodularity of $L$.
\end{proof}

The implication (i) $\Rightarrow$ (ii) of the Corollary is the converse to the 
Whitehead Theorem.

Using the well-known classification of $3$-dimensional Lie algebras (see, for example,
\cite[Chapter 1, \S 6, Exercise 23]{bourbaki}), it is easy to list $3$-dimensional 
unimodular Lie algebras. The list consists of the abelian algebra, 
the nilpotent (Heisenberg) algebra, simple algebras, and solvable algebras
with basis $\{x,y,z\}$ and multiplication table $[x,y] = 0$, $[x,z] = ax + by$
$[y,z] = cx - ay$ for certain $a,b,c\in K$ with $a^2 + bc \ne 0$.

\section*{Acknowledgements}

I am grateful to Dietrich Burde for interesting discussion, and 
to the anonymous referee for pointing out few rough edges in the previous
version of the manuscript.

\end{document}